\newcommand{\seqnum}[1]{\href{http://oeis.org/#1}{{#1}}}
\newcommand{\ud}{up-up-or-down-down}
\newcommand{\sq}{\frac{1}{\sqrt2}}
\newcommand{\hf}{\frac12}
\newcommand{\2}{\sqrt{2}}
\renewcommand{\L}{\mathscr{L}}
\newtheorem{thm}{Theorem}
\newtheorem{lem}[thm]{Lemma}
\newtheorem*{theorem}{Theorem}
\newcommand{\abs}[1]{\left|#1\right|}
\newcommand{\x}[1]{\frac{x^{#1}}{#1!}}
\newcommand{\xp}[1]{\frac{x^{#1}}{(#1)!}}
\newcommand{\y}[1]{\frac{y^{#1}}{#1!}}
\newcommand{\yp}[1]{\frac{y^{#1}}{(#1)!}}
\begin{document}
\title{Counting up-up-or-down-down permutations}
\author{Ira M. Gessel}
\address{Department of Mathematics\\
   Brandeis University\\
   Waltham, MA 02453}
\email{gessel@brandeis.edu}
\date{\today}
\begin{abstract}
Answering a question of Donald Knuth, we find the bivariate exponential generating function for ``up-up-or-down-down'' permutations  of odd length according to their last entry. An \ud\ permutation is a permutation $a_1a_2\cdots a_n$ satisfying $a_{2i-1}<a_{2i}$ if and only if $a_{2i}<a_{2i+1}$ for $1\le i <n/2$. Equivalently, an \ud\ permutation is one in which every peak and every valley is odd.

Let  $p_n(k)$, for $-n\le k \le n$,  be the number of \ud\ permutations of the $(2n+1)$-element set $\{-n,-n+1,\dots, 0, \dots, n-1, n\}$ that end with $k$. Our starting point for the derivation of the generating function is the recurrence 
$
p_{n+1}(k)=\sum_{j=-n}^n \abs{j-k} p_n(j)
$
found by Knuth.

\end{abstract}

\maketitle
\thispagestyle{empty}

\section{Introduction}
Donald Knuth \cite{whirlpool} introduced \emph{whirlpool permutations}, which he defines  in the following way: An $m \times n$ matrix has $(m-1)(n-1)$ submatrices  of size $2 \times 2$ consisting of adjacent rows and columns. An $m \times n$ whirlpool permutation is an $m \times n$ matrix with entries the integers from 1 to  $mn$, in which the relative order of the elements in each of those submatrices is a ``vortex''---that is, it travels a cyclic path from smallest to largest, either clockwise or counterclockwise.

Knuth computed the number of $m\times n$ whirlpool permutations for small values of $m$ and~$n$ and found that the number $W_n$ of $2\times n$ whirlpool permutations seemed be given by sequence \seqnum{A261683} of the On-Line Encyclopedia of Integer Sequences (OEIS) \cite{oeis}, which counts permutations $a_1a_2\cdots a_{2n}$ satisfying $a_{2i-1}<a_{2i}$ if and only if $a_{2i}<a_{2i+1}$ for $1\le i\le n-1$.  He proved that this is indeed the case. For example, if $n=2$ then there are eight such permutations and they satisfy either $a_1<a_2<a_3$ or $a_1>a_2>a_3$. Note that there is no restriction on $a_{2n}$. Thus $W_n = 2n U_n$, where $U_n$ is the number of permutations of $\{1,2,\dots, 2n-1\}$ satisfying $a_{2i-1}<a_{2i}$ if and only if $a_{2i}<a_{2i+1}$ for $1\le i\le n-1$. 

Knuth called permutations satisfying $a_{2i-1}<a_{2i}$ if and only if $a_{2i}<a_{2i+1}$ \emph{\ud\ permutations} since the ups and downs (often called \emph{rises and falls} or \emph{ascents and descents}) come in pairs. They may be described in a slightly different way: For a permutation $a_1a_2\cdots a_m$, an index $i$, with $1<i< m$, is called a \emph{peak} if $a_{i-1}<a_i>a_{i+1}$, and $i$ is called a \emph{valley} if $a_{i-1}> a_i < a_i$. Then the \ud\ permutations are those in which every peak and every valley is odd.

The numbers $U_n$ that count \ud\ permutations of odd length are sequence \seqnum{A122647} in the OEIS. They have the generating function
\begin{align}
\sum_{n=0}^\infty U_n \frac{x^{2n+1}}{(2n+1)!}
   &=\frac{\sqrt2\tanh(x/\sqrt2)}{1-(x/\sqrt2)\tanh(x/\sqrt2)}=\frac{\2\sinh(x/\2)}{\cosh(x/\2) -(x/\2)\sinh(x/\2)},\label{e-0}\\
   &=x+2\x{3}+14\x{5}+204\x{7}+5104\x{9}+10570416\x{13} +\cdots, \notag
\end{align}
as shown by La Croix \cite{lacroix}, Zhuang \cite{zhuang}, and Knuth \cite[Exercise 33(b)]{knuthdraft}. A derivation of the   generating function for $W_n$, which is $x$ times \eqref{e-0}, was given by Basset \cite{basset}.

In his investigation of \ud\ permutations Knuth considered the refined problem of counting \ud\ permutations with a fixed last entry. He defined the number $p_n(k)$, for $-n\le k \le n$, to be the number of \ud\ permutations of the $(2n+1)$-element set $\{-n,-n+1,\dots, 0, \dots, n-1, n\}$ that end with $k$. Then Knuth (personal communication) observed that the numbers $p_n(k)$ for $-n\le k\le n$ are determined by the recurrence
\begin{equation}
\label{e-1}
p_{n+1}(k)=\sum_{j=-n}^n \abs{j-k} p_n(j)
\end{equation}
for $n\ge0$,
with the initial value $p_0(0)=1$, and asked if a generating function for $p_n(k)$ could be obtained from \eqref{e-1}.  In this paper we show how this can be done. 

Here are the values of $p_n(k)$ for small $n$. 
\[
\offinterlineskip\vbox
{\halign{\ \hfil\strut$#$\hfil\ \vrule&&\hfil\quad $#$\quad\cr
n\backslash k&-4&-3&-2&-1&0&1&2&3&4\cr
\noalign{\hrule}
&&&&&1&&&&\cr
1&&&&1&0&1&&&\cr
2&&&4&2&2&2&4&&\cr
3&&42&28&22&20&22&28&42&\cr
4&816&612&492&428&408&428&492&612&816\cr
}}
\]

\section{The Proof}

First we verify that Knuth's recurrence \eqref{e-1} holds.

\begin{lem}
\label{l-red}
The numbers $p_n(k)$ satisfy the recurrence \eqref{e-1}.
\end{lem}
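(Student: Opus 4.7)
The plan is to build a direct bijection between \ud\ permutations $b=b_1\cdots b_{2n+3}$ of $\{-(n+1),\ldots,n+1\}$ ending at $k$ and pairs $(a,m)$, where $a=a_1\cdots a_{2n+1}$ is a \ud\ permutation of $\{-n,\ldots,n\}$ and $m$ is the penultimate entry of the new permutation. The key initial observation is that the \ud\ condition at index $i=n+1$ forces the triple $b_{2n+1},b_{2n+2},b_{2n+3}$ to be strictly monotonic. So I would set $m=b_{2n+2}$, remove the last two entries, and standardize the $(2n+1)$-element set $S=\{-(n+1),\ldots,n+1\}\setminus\{k,m\}$ to $\{-n,\ldots,n\}$ via the unique order-preserving bijection $\sigma\colon S\to\{-n,\ldots,n\}$. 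The resulting permutation $a$ obtained by applying $\sigma$ coordinatewise is again \ud, and it ends at $j:=\sigma(b_{2n+1})$.

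With $k$ fixed, this map is reversible: from $(a,m)$ one recovers $S$, reads off $b_{2n+1}=\sigma^{-1}(j)$, and reconstructs the rest of $b$ coordinatewise. So after checking well-definedness, the only remaining task is to count, for each pair $(j,k)$, the number of choices of $m\in\{-(n+1),\ldots,n+1\}\setminus\{k\}$ for which the reconstructed triple $(\sigma^{-1}(j),m,k)$ is monotonic. I expect this count to equal $|j-k|$; multiplying by $p_n(j)$ and summing over $j$ then yields \eqref{e-1}.

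To carry out the count I would use the formula $\sigma(v)=v+1-c_v$, where $c_v=|\{x\in\{k,m\}:x<v\}|\in\{0,1,2\}$, so that $\sigma^{-1}(j)\in\{j-1,j,j+1\}$ depending on where $k$ and $m$ sit relative to $b_{2n+1}$. The monotonicity condition then splits into two symmetric subcases: in the increasing case $\sigma^{-1}(j)<m<k$ one has $c_{\sigma^{-1}(j)}=0$, forcing $\sigma^{-1}(j)=j-1$ and leaving $m\in\{j,j+1,\ldots,k-1\}$, which contributes $\max(k-j,0)$ values; the decreasing case is symmetric and contributes $\max(j-k,0)$. Summing the two subcases gives exactly $|j-k|$.

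The main obstacle I anticipate is the bookkeeping in this last step, namely tracking how standardization shifts $b_{2n+1}$ relative to $j$ according to where $k$ and $m$ sit. The crucial structural point is that the case $c_{\sigma^{-1}(j)}=1$ cannot occur under the monotonicity hypothesis, since then $k$ and $m$ would lie on opposite sides of $b_{2n+1}$, contradicting the strict monotonicity of the triple. Once this is observed the case analysis collapses to the two calculations above, and the recurrence drops out.
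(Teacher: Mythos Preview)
Your argument is correct and follows essentially the same approach as the paper: remove the last two entries of an \ud\ permutation of $\{-(n+1),\dots,n+1\}$ ending at $k$, standardize what remains, observe that the final triple must be monotone, and count the choices for the penultimate entry. The paper carries out the same two-case analysis (increasing versus decreasing triple), arriving at the same shifts $b_{2n+1}=j\mp1$ and the same counts $k-j$ and $j-k$; your explicit formula $\sigma(v)=v+1-c_v$ and the observation that $c_{\sigma^{-1}(j)}=1$ is impossible are a slightly more formal packaging of exactly the same computation.
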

\begin{proof}
Given an \ud\ permutation $a_1a_2\cdots a_{2n+3}$ of $\{-(n+1), -n,\dots, n, n+1\}$ with $a_{2n+3}=k$, we remove $a_{2n+2}$ and $a_{2n+3}$ and replace the remaining entries in an order-preserving way with $-n,\dots, n$, thus obtaining an \ud\ permutation $b_1b_2\cdots b_{2n+1}$ of $\{-n,\dots,n\}$. How many such permutations $a_1a_2\cdots a_{2n+3}$ are there with $b_{2n+1}=j$?
We consider two cases. 

First, suppose that $a_{2n+1}<a_{2n+2}<a_{2n+3}=k$. In this case, $b_{2n+1}=a_{2n+1}+1$, so $a_{2n+1}=b_{2n+1}-1=j-1$ and $j<k$.  Then $a_{2n+2}$ may be any of the $k-j$ numbers $j, j+1,\dots, k-1$, and $a_1a_2\cdots a_{2n+3}$ is uniquely determined by  $b_1b_2\cdots b_{2n+1}$ and the choice of $a_{2n+2}$, so there are $(k-j)p_n(j)$ possibilities for  $a_1a_2\cdots a_{2n+3}$. 

In the second case, suppose that $a_{2n+1}>a_{2n+2}>a_{2n+3}$. In this case $b_{2n+1}=a_{2n+1}-1$, so $a_{2n+1}=b_{2n+1}+1=j+1$ and $j>k$. Then $a_{2n+2}$ may be any of the $j-k$ numbers $k+1, k+2,\dots, j$, so as in the first case, there are $(j-k)p_{n}(j)$ possibilities for  $a_1a_2\cdots a_{2n+3}$. 

Thus 
\begin{equation*}
p_{n+1}(k)=\sum_{j=-n}^{k-1} (k-j)p_n(j) + \sum_{j=k+1}^n (j-k) p_n(j) = \sum_{j=-n}^n |j-k| p_n(j).\qedhere
\end{equation*}

\end{proof}

Next, we derive from \eqref{e-1} three identities for $p_n(k)$ that are easier to work with then \eqref{e-1}.

\begin{lem}
\label{l-0}
For $n\ge0$ and $-n\le k\le n$ we have  $p_n(-k)=p_n(k)$.
\end{lem}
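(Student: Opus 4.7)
The plan is to give a short bijective proof via negation. Given an \ud\ permutation $a_1 a_2 \cdots a_{2n+1}$ of $\{-n, -n+1, \ldots, n\}$ with $a_{2n+1} = k$, define $b_i = -a_i$ for each $i$. Then $b_1 b_2 \cdots b_{2n+1}$ is again a permutation of $\{-n, \ldots, n\}$, and $b_{2n+1} = -k$.

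The key step is checking that $b$ is itself an \ud\ permutation. Negation reverses every strict inequality, so the defining condition $a_{2i-1} < a_{2i} \iff a_{2i} < a_{2i+1}$ becomes $b_{2i-1} > b_{2i} \iff b_{2i} > b_{2i+1}$. Since both sides of a biconditional are negated simultaneously, this is equivalent to $b_{2i-1} < b_{2i} \iff b_{2i} < b_{2i+1}$, which is exactly the \ud\ condition for $b$. Phrased in terms of peaks and valleys, negation swaps peaks with valleys but preserves the set of positions at which they occur, so the requirement that every peak and every valley be odd is preserved. Hence $a \mapsto b$ is an involution on \ud\ permutations of $\{-n,\ldots,n\}$ that exchanges those ending in $k$ with those ending in $-k$, and the identity $p_n(-k)=p_n(k)$ follows.

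If one prefers to derive the identity from the recurrence \eqref{e-1}, an equally short inductive argument works. The base case $p_0(0)=1$ is immediate. For the inductive step, assuming $p_n(-j) = p_n(j)$ for all $j$, write
\begin{equation*}
p_{n+1}(-k) = \sum_{j=-n}^n |j+k|\, p_n(j),
\end{equation*}
reindex by the substitution $j \mapsto -j$ (which permutes $\{-n,\ldots,n\}$), and apply the inductive hypothesis to obtain $\sum_{j=-n}^n |k-j|\, p_n(j) = p_{n+1}(k)$. I anticipate no real obstacle in either route; the only content is the observation that the \ud\ condition is symmetric under negation of values. I would choose the bijective proof for its transparency, while noting that the inductive version fits naturally with the other identities about to be extracted from \eqref{e-1}.
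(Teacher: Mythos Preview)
Your proposal is correct. The paper's proof is exactly the inductive route you sketch second---stated in a single sentence as ``This follows easily by induction from \eqref{e-1}''---so your second argument matches it precisely, while your bijective negation argument is a valid and slightly more transparent alternative that the paper does not give.
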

\begin{proof}
This follows easily by induction from \eqref{e-1}.
\end{proof}

\begin{lem}
\label{l-1}
For $n\ge0$ and $-n\le k\le n$ we have
$p_{n+1}(k+1)-2p_{n+1}(k)+p_{n+1}(k-1) =2p_{n}(k)$.
\end{lem}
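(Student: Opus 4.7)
The plan is to substitute the recurrence \eqref{e-1} into the left-hand side and exploit the fact that the discrete second difference of $|j-k|$ (as a function of $k$) is concentrated at $j=k$.

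First I would apply \eqref{e-1} three times to write
\begin{equation*}
p_{n+1}(k+1)-2p_{n+1}(k)+p_{n+1}(k-1)=\sum_{j=-n}^{n}\bigl(|j-k-1|-2|j-k|+|j-k+1|\bigr)p_n(j).
\end{equation*}
Note that, since $-n\le k\le n$, the shifted indices $k\pm1$ lie in $[-(n+1),n+1]$, so all three instances of \eqref{e-1} are legitimate.

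The key observation is the pointwise identity
\begin{equation*}
|j-k-1|-2|j-k|+|j-k+1|=2[j=k]
\end{equation*}
for integers $j,k$. This is an easy three-case check: if $j\ge k+1$ the three absolute values are $j-k-1$, $j-k$, $j-k+1$ and cancel; if $j\le k-1$ they are $k-j+1$, $k-j$, $k-j-1$ and again cancel; and if $j=k$ they are $1$, $0$, $1$, giving $2$. Substituting this into the sum collapses it to the single term $2p_n(k)$, which is exactly the right-hand side.

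There is essentially no obstacle here; the only thing to be a little careful about is the bookkeeping of index ranges to confirm that \eqref{e-1} may indeed be invoked at $k-1$, $k$, and $k+1$. Everything else is the trivial verification that $|j-k|$ is a discrete harmonic function of $k$ away from $j=k$.
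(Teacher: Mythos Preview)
Your proof is correct and follows essentially the same route as the paper: substitute the recurrence \eqref{e-1} into the second difference and do the three-case check showing that $|j-k-1|-2|j-k|+|j-k+1|$ equals $2$ when $j=k$ and $0$ otherwise. Your extra remark verifying that $k\pm1$ lie in $[-(n+1),n+1]$ so that \eqref{e-1} applies is a small refinement the paper leaves implicit.
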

\begin{proof}
By \eqref{e-1}, we have
\begin{multline*}
\qquad\quad
p_{n+1}(k+1)-2p_{n+1}(k)+p_{n+1}(k-1) \\
   =\sum_{j=-n}^n \bigl(\abs{j-k-1}-2\abs{j-k}+\abs{j-k+1} \bigr)p_n(j).
\qquad\quad
\end{multline*}
If $j>k$ then 
\begin{equation*}
\abs{j-k-1}-2\abs{j-k}+\abs{j-k+1} = (j-k-1) -2(j-k) +(j-k+1) = 0.
\end{equation*}
If $j<k$ then this sum is similarly 0. Finally, if $j=k$ then 
\begin{equation*}
\abs{j-k-1}-2\abs{j-k}+\abs{j-k+1} = 1-2\cdot0+1=2.\qedhere
\end{equation*}
\end{proof}

\begin{lem}
\label{l-2}
For  $n\ge1$ we have 
$(n-1)p_n(n) = np_n(n-1)$.
\end{lem}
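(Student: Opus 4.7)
The plan is to apply the recurrence \eqref{e-1} (with $n$ replaced by $n-1$) to express both $p_n(n)$ and $p_n(n-1)$ in terms of the values $p_{n-1}(j)$ for $-(n-1)\le j\le n-1$. The point is that for $k\in\{n-1,n\}$ we have $k\ge j$ for every such $j$, so the absolute values collapse: $|j-k|=k-j$. This reduces the recurrence to a combination of just two simple sums.

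Concretely, I would set $S=\sum_{j=-(n-1)}^{n-1} p_{n-1}(j)$ and $T=\sum_{j=-(n-1)}^{n-1} j\,p_{n-1}(j)$, and write
\begin{align*}
p_n(n) &= \sum_{j=-(n-1)}^{n-1}(n-j)\,p_{n-1}(j) = nS - T,\\
p_n(n-1) &= \sum_{j=-(n-1)}^{n-1}(n-1-j)\,p_{n-1}(j) = (n-1)S - T.
\end{align*}
Next I would invoke Lemma \ref{l-0}: since $p_{n-1}(-j)=p_{n-1}(j)$, the sum $T$ is antisymmetric and therefore $T=0$. This leaves $p_n(n)=nS$ and $p_n(n-1)=(n-1)S$, and the identity $(n-1)p_n(n)=n(n-1)S=n\,p_n(n-1)$ follows immediately.

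There is really no obstacle to speak of; the only thing to double-check is the boundary case $n=1$, where $S=p_0(0)=1$ gives $p_1(1)=1$ and $p_1(0)=0$, so the claimed identity reads $0=0$. One can also sanity-check against the table: for $n=2,3,4$ we get $1\cdot 4=2\cdot 2$, $2\cdot 42=3\cdot 28$, and $3\cdot 816=4\cdot 612$, all of which are correct. The lemma will later provide a boundary condition that, together with Lemma \ref{l-1}, determines the $p_n(k)$ from $p_n(n)$.
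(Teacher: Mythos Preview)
Your proof is correct and follows essentially the same approach as the paper's: both apply the recurrence \eqref{e-1}, observe that the absolute values collapse because $k\ge j$ throughout, and then invoke the symmetry of Lemma~\ref{l-0} to finish. Your organization via $S$ and $T$ (noting $T=0$ by antisymmetry) is a bit more streamlined than the paper's, which instead splits each sum into the $j=0$, $j<0$, and $j>0$ ranges and shows explicitly that $p_{n+1}(n+1)=(n+1)\sum_j p_n(j)$ and $p_{n+1}(n)=n\sum_j p_n(j)$, but the substance is the same.
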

\begin{proof}
We prove the equivalent formula $np_{n+1}(n+1)=(n+1)p_{n+1}(n)$ for $n\ge0$.
Using the fact that $p_n(-j)=p_n(j)$, we have 
\begin{align*}
p_{n+1}(n+1)&=\sum_{j=-n}^n \abs{j-n-1}p_n(j)\\
  &=(n+1)p_n(0) +\sum_{j=-n}^{-1}\abs{j-n-1}p_n(j) +\sum_{j=1}^{n}\abs{j-n-1}p_n(j)\\
  &=(n+1)p_n(0)+\sum_{j=1}^{n}(n+j+1)p_n(j) +\sum_{j=1}^{n}(n-j+1)p_n(j)\\
  &=(n+1)p_n(0)+2(n+1)\sum_{j=1}^{n}p_n(j).
\end{align*}
Similarly, we have
\begin{align*}
p_{n+1}(n)&=\sum_{j=-n}^n \abs{j-n}p_n(j)\\
 &=n p_n(0) +\sum_{j=-n}^{-1}\abs{j-n}p_n(j) +\sum_{j=1}^{n}\abs{j-n}p_n(j)\\
  &=n p_n(0)+\sum_{j=1}^{n}(n+j)p_n(j) +\sum_{j=1}^{n}(n-j)p_n(j)\\
  &=n p_n(0)+2n\sum_{j=1}^{n}p_n(j),
\end{align*}
and the equality $np_{n+1}(n+1)=(n+1)p_{n+1}(n)$ follows.
\end{proof}

Next we need two facts about ``Seidel arrays." (Cf.~Dumont \cite{dumont} and Dumont and Viennot \cite{dv}.) 

\begin{lem}
\label{l-3}
\textup{(a)}
Let $a_{i,j}$ be an array of numbers, indexed by nonnegative integers $i$ and $j$, and satisfying $a_{i+1,j}-a_{i,j+1}=2 a_{i,j}$ for all nonnegative integers $i$ and $j$.
Then \begin{equation*}
\sum_{i,j=0}^\infty a_{i,j}\x i \y j = e^{-2 y}A(x+y),
\end{equation*}
where $A(x) = \sum_{i=0}^\infty a_{i,0} x^i/i!$.

\textup{(b)} 
Now suppose that in addition $a_{j,i}=(-1)^{i+j}a_{i,j}$ for all $i$ and $j$. Then
\begin{equation*}
\sum_{i,j=0}^\infty a_{i,j}\x i \y j = e^{  (x-y)}B(x+y),
\end{equation*}
where $B(x)$ is a power series with only even powers of $x$,
and thus
\begin{equation*}
\sum_{\textup{$i$+$j$ even}} a_{i,j}\x i \y j 
  =\cosh(x-y)\, B(x+y)
\end{equation*}
and 
\begin{equation*}
\sum_{\textup{$i$+$j$ odd}} a_{i,j}\x i \y j 
  =\sinh(x-y)\, B(x+y).
\end{equation*}

\end{lem}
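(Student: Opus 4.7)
My plan is to encode both assertions as a single bivariate EGF identity. Let $F(x,y)=\sum_{i,j\ge 0}a_{i,j}\,\x i\,\y j$. The recurrence $a_{i+1,j}-a_{i,j+1}=2a_{i,j}$ is simply the PDE $F_x-F_y=2F$, and the symmetry hypothesis in (b) is $F(y,x)=F(-x,-y)$ (since swapping $i$ and $j$ turns $x^iy^j$ into $x^jy^i$, while negating both variables contributes $(-1)^{i+j}$). So the whole lemma reduces to solving one linear PDE and then imposing a functional-equation constraint on the integration constant.

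For (a), I will verify the ansatz $F(x,y)=e^{-2y}H(x+y)$ directly: $F_x-F_y=e^{-2y}H'-\bigl(-2e^{-2y}H+e^{-2y}H'\bigr)=2e^{-2y}H=2F$, so any such $F$ solves the PDE; conversely $F_x-F_y=2F$ becomes $-\partial_v G=2G$ under the change of variables $(u,v)=(x+y,y)$ with $G(u,v)=F(x,y)$, and this is a first-order ODE in $v$ whose general solution is $G(u,v)=e^{-2v}H(u)$. Setting $y=0$ forces $H(x)=F(x,0)=A(x)$, giving the formula.

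For (b), I plug the result of (a) into $F(y,x)=F(-x,-y)$ to get $e^{-2x}A(x+y)=e^{2y}A(-x-y)$. Writing $u=x+y$, this simplifies to $A(u)=e^{2u}A(-u)$, i.e.\ $e^{-u}A(u)=e^{u}A(-u)$, which says exactly that $B(u):=e^{-u}A(u)$ is an even power series. Then
\begin{equation*}
F(x,y)=e^{-2y}A(x+y)=e^{-2y}\cdot e^{x+y}B(x+y)=e^{x-y}B(x+y).
\end{equation*}
The cosh/sinh formulas then follow by isolating parts according to the parity of $i+j$: since $a_{i,j}+a_{j,i}=a_{i,j}(1+(-1)^{i+j})$, one has $\tfrac12\bigl(F(x,y)+F(y,x)\bigr)=\sum_{i+j\text{ even}}a_{i,j}\,\x i\,\y j$, and by the displayed formula this also equals $\tfrac12\bigl(e^{x-y}+e^{y-x}\bigr)B(x+y)=\cosh(x-y)\,B(x+y)$. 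The sinh identity is obtained the same way from $F(x,y)-F(y,x)$.

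The only step that requires any thought is spotting the correct separated form $e^{-2y}H(x+y)$ in (a); the rest is bookkeeping. There is no real obstacle, though one must be careful in (b) to recognize that the functional equation $A(u)=e^{2u}A(-u)$ is precisely the statement that $e^{-u}A(u)$ is even, as a formal power series, so that the factorization $A=e^u B$ with $B$ even is both legitimate and unique.
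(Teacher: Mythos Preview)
Your proof is correct and follows essentially the same approach as the paper: translate the recurrence to the PDE $F_x-F_y=2F$, invoke uniqueness with the initial data $F(x,0)=A(x)$ to get $F=e^{-2y}A(x+y)$, and then use the symmetry $F(y,x)=F(-x,-y)$ to force $B(u)=e^{-u}A(u)$ to be even. Your change-of-variables argument for uniqueness and your explicit symmetrization $\tfrac12\bigl(F(x,y)\pm F(y,x)\bigr)$ for the $\cosh$/$\sinh$ split spell out details that the paper leaves implicit, but the route is the same.
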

\begin{proof}
Let\\[-20pt]
\[R(x,y) = \sum_{i,j=0}^\infty a_{i,j}\x i \y j.\]
Then the recurrence  $a_{i+1,j}-a_{i,j+1}=2 a_{i,j}$ is equivalent to the differential equation
\begin{equation}
\label{e-R}
\frac{\partial R}{\partial x}(x,y) - \frac{\partial R}{\partial y}(x,y) = 2 R(x,y).
\end{equation}
Since the entire array $(a_{i,j})$ is determined by the recurrence and the values of $a_{i,0}$, it follows that $R(x,y)$ is the unique formal power series solution of \eqref{e-R} satisfying $R(x,0)= A(x)$. Since $e^{-2 y}A(x+y)$ satisfies the differential equation and reduces to $A(x)$ for $y=0$, it must be equal to $R(x,y)$.

The condition $a_{j,i}=(-1)^{i+j}a_{i,j}$ is equivalent to $R(y,x) = R(-x,-y)$. If this holds then $e^{-2 x}A(x+y) = e^{2 y}A(-x-y)$. Setting $y=0$ gives $e^{-2 x}A(x) = A(-x)$ so $B(x) =e^{-  x}A(x)$ satisfies $B(x) = B(-x)$, and $e^{-2 y}A(x+y)= e^{(x-y)}B(x+y)$.
\end{proof}

We note that in part (b) of Lemma \ref{l-3}, instead of the hypothesis $a_{j,i}=(-1)^{i+j}a_{i,j}$, it would be sufficient to have $a_{0,i}=(-1)^i a_{i,0}$.

We can now prove the main result.

\begin{theorem}
\begin{equation}
\label{e-2var}
\sum_{n=0}^\infty \sum_{k=-n}^n  p_n(k) \xp{n+k}\yp{n-k}
  = \frac{\cosh\Bigl(\sq(x-y)\Bigr)}{\cosh\Bigl(\sq(x+y)\Bigr) -\sq(x+y)\sinh\Bigl(\sq(x+y)\Bigr)}.
\end{equation}
\end{theorem}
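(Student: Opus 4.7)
Let $F(x, y)$ denote the left-hand side of \eqref{e-2var}. My plan is to translate the preceding lemmas into analytic properties of $F$, use Lemmas~\ref{l-0} and~\ref{l-1} to determine $F$ up to one unknown univariate series, and then pin that series down using Lemma~\ref{l-2}.

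A routine matching of coefficients shows that Lemma~\ref{l-0} is equivalent to the symmetry $F(x, y) = F(y, x)$, and that Lemma~\ref{l-1} is equivalent to the PDE $(\partial_x - \partial_y)^2 F = 2 F$. Passing to the coordinates $u = x+y$, $v = x-y$, this equation becomes $\partial_v^2 F = \hf F$, whose general formal-power-series solution is $F = g(u)\cosh(v/\2) + h(u)\sinh(v/\2)$; the symmetry forces $F$ to be even in $v$, so $h \equiv 0$ and $F(x, y) = g(x+y)\cosh\bigl(\sq(x-y)\bigr)$ for some univariate power series $g$. (Equivalently, one factors the operator as $(\partial_x - \partial_y - \2)(\partial_x - \partial_y + \2)$ and applies Lemma~\ref{l-3}(a), with $\2$ in place of $2$, to each factor, then uses symmetry to collapse the two resulting exponentials into a single $\cosh$.)

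To determine $g$, I evaluate $F$ and its $y$-derivative at $y = 0$. Setting $y = 0$ kills every term except those with $n = k$, giving $F(x, 0) = P(x) := \sum_{n\ge 0} p_n(n)\,\xp{2n}$, so $P(x) = g(x)\cosh(x/\2)$. A single $\partial_y$ before setting $y = 0$ picks out only the $n - k = 1$ terms, so $\partial_y F(x, 0) = \sum_{n\ge 1} p_n(n-1)\,\xp{2n-1}$; the closed form meanwhile yields $\partial_x F(x,0) - \partial_y F(x,0) = \2\,g(x)\sinh(x/\2)$. Invoking Lemma~\ref{l-2} in the form $p_n(n) - p_n(n-1) = p_n(n)/n$, and using the identity $1/(n(2n-1)!) = 2/(2n)!$, collapses $\sum_{n\ge 1}(p_n(n)-p_n(n-1))\,\xp{2n-1}$ to $2(P(x)-1)/x$. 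Combining with $P(x) = g(x)\cosh(x/\2)$ yields the linear equation $g(x)\bigl(\cosh(x/\2) - (x/\2)\sinh(x/\2)\bigr) = 1$, and substitution into $F(x, y) = g(x+y)\cosh\bigl(\sq(x-y)\bigr)$ then produces \eqref{e-2var}.

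The main obstacle I anticipate is this last stage: spotting that Lemma~\ref{l-2} supplies exactly the relation between $p_n(n)$ and $p_n(n-1)$ needed to convert $\partial_y F(x, 0)$ into a scalar multiple of $F(x, 0) - 1$, via the arithmetic identity $1/(n(2n-1)!) = 2/(2n)!$. Once this is in hand, everything else is routine formal-power-series manipulation.
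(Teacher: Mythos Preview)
Your proof is correct and follows essentially the same architecture as the paper's: Lemma~\ref{l-1} gives a second-order PDE in $\partial_x-\partial_y$, Lemma~\ref{l-0} (symmetry in $x,y$) cuts the solution down to the form $g(x+y)\cosh\bigl(\sq(x-y)\bigr)$, and Lemma~\ref{l-2} then determines $g$ via exactly the identity $P(x,0)-1 = x\cdot\tfrac12(\partial_x-\partial_y)F(x,0)$ that you derive. The only cosmetic difference is that the paper inserts a factor $2^n$ to clear the $\sqrt2$'s and solves the PDE by factoring $\L^2-4 = (\L-2)(\L+2)$ and invoking the Seidel-array Lemma~\ref{l-3}, whereas you change to $(u,v)=(x+y,x-y)$ coordinates and solve $\partial_v^2 F = \tfrac12 F$ directly; these are equivalent maneuvers.
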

\begin{proof}
Let 
 \[P=P(x,y) = \sum_{n=0}^\infty \sum_{k=-n}^n 2^n p_n(k) \xp{n+k}\yp{n-k}.\]
 (The power of two is inserted to avoid factors of $\2$ in many of our formulas.) 
 By Lemma~\ref{l-0}, $P(y,x)=P(x,y)$, and since every term in $P(x,y)$ has even total degree, we have $P(-y,-x)=P(x,y)$.

Let $\L$ be the linear operator on power series in $x$ and $y$ defined by 
\begin{equation*}
\L(G) = \frac{\partial G}{\partial x} - \frac{\partial G}{\partial y}.
\end{equation*}
Then the recurrence of Lemma \ref{l-1} is equivalent to 
\begin{math}
\L^2(P) = 4P.
\end{math}
Now let $Q=Q(x,y) = \hf\L(P)$
and let $R=R(x,y)=P+Q$. Then \[\L(R)=\L(P)+\tfrac12\L^2(P)=2 Q+2P = 2R.\]

It is easy to check that since $P(-y,-x) = P(x,y)$, we have $Q(-y,-x) = Q(x,y)$. 
Thus by part (b) of Lemma \ref{l-3}, we have
\begin{equation}
\label{e-RB}
R= e^{x-y}B(x+y),
\end{equation}
where $B(x)$ is a power series with only even powers of $x$. All we need to do now is determine $B(x)$.
From \eqref{e-RB} we have
\begin{equation}
\label{e-PQB}
\begin{aligned}
P(x,0) &= \cosh x\, B(x),\\
Q(x,0) &= \sinh x\,  B(x).
\end{aligned}
\end{equation}

Next, we want to show that 
$P(x,0)=1+xQ(x,0)$.
We have 
\[P(x,0) = \sum_{n=0}^\infty 2^n p_n(n) \xp{2n}\]
and it follows from the definition of $Q(x,y)$ that
\[Q(x,0) =  \sum_{n=1}^\infty 2^{n-1}\bigl(p_n(n) -p_n(n-1)\bigr) \xp{2n-1},\]
so
\begin{align*}
x Q(x,0) &= \sum_{n=1}^\infty x2^{n-1}\bigl(p_n(n) -p_n(n-1)\bigr) \xp{2n-1}\\
  &=\sum_{n=1}^\infty 2^n n\bigl(p_n(n) -p_n(n-1)\bigr) \xp{2n}\\
  &=\sum_{n=1}^\infty 2^n p_n(n) \xp{2n}\quad \text{(by Lemma \ref{l-2})}\\
  &=P(x,0)-1.
\end{align*}

Thus from \eqref{e-PQB} we obtain
\begin{equation}
\label{e-PQ1}
\begin{aligned}
P(x,0) &= \cosh x\, B(x)\\
P(x,0) &=1+x\sinh x\, B(x).
\end{aligned}
\end{equation}
Solving for $B(x)$ in \eqref{e-PQ1} gives
\[B(x) = \frac{1}{\cosh x - x\sinh x}.\]
Thus
\[R(x,y) = e^{x-y}B(x+y) = \frac{e^{x-y}}{\cosh(x+y)-(x+y)\sinh(x+y)}\]
and $P(x,y)$ is the sum of the terms in $R(x,y)$ of even total degree, so 
\begin{equation*}
P(x,y) = \frac{\cosh(x-y)}{\cosh(x+y)-(x+y)\sinh(x+y)}.
\end{equation*}

Then \eqref{e-2var} follows by replacing $x$ with $x/\sqrt2$ and $y$ with $y/\sqrt2$.
\end{proof}

\section{Further remarks}

We can derive the one-variable generating function \eqref{e-0} from \eqref{e-2var} in two different ways. First, we may set $y=0$ in \eqref{e-2var}, obtaining
\begin{equation}
\label{e-1var}
\sum_{n=0}^\infty  p_n(n) \xp{2n}
  = \frac{\cosh(x/\sqrt2)}{\cosh(x/\sqrt2) -(x/\sqrt2)\sinh(x/\sqrt2)}.
\end{equation}
Now $p_n(n)$ is the number of \ud\ permutations of $\{-n,-n+1,\dots, 0, \dots n-1, n\}$ that end with $n$. If we remove the final $n$, then for $n>0$ we have an \ud\ permutation of $2n$ numbers ending with an up. Therefore if we subtract 1 from \eqref{e-1var} and  multiply by 2, we will obtaining the exponential generating function for the numbers $W_n=2nU_n$ as defined in the introduction. Dividing by $x$ gives \eqref{e-0}.

With a little more work, we can derive \eqref{e-0}  by using \eqref{e-2var} to evaluate $\sum_{k=-n}^n p_n(k)$. We replace $x$ with $tx$ and $y$ with $(1-t)x$ in \eqref{e-2var} and integrate with respect to $t$ from 0 to 1, using the beta integral
\begin{equation*}
\int_0^1 t^{n+k}(1-t)^{n-k} = \frac{(n+k)!\,(n-k)!}{(2n+1)!}.
\end{equation*}
We then multiply by $x$. The result is \eqref{e-0}.

One might wonder why the generating function for $p_n(k)$ is an exponential generating function in two variables, rather than an exponential generating function in one variable for the Laurent polynomials  $\sum_{k=-n}^n p_n(k)t^k$. One way of combinatorially explaining the bivariate exponential generating function, which arises from a different approach and from other problems involving permutations where the last entry is fixed, is to restate the problem as counting permutations of a set $\{-m, -m-1, \dots, 0, \dots, n-1, n\}$ that end with~0. In these problems it is usually best to think of the positive and negative numbers as different sorts of labels, and of the final 0 as not a label at all, so the number of permutations in question of such a set will be multiplied by $\frac{x^m}{m!}\frac{y^n}{n!}$. 

We give two examples. First, the bivariate generating function for alternating permutations (satisfying $a_1>a_2<a_3>\dots$) is 
\begin{equation}
\label{e-alt}
\frac{\cos y + \sin y}{\cos(x + y)}.
\end{equation}
In other words, the coefficient of  $\frac{x^m}{m!}\frac{y^n}{n!}$ in \eqref{e-alt} is the number of alternating permutations of the set $\{-m,\dots, 0, \dots, n\}$ that end with 0. The coefficients here are sequence \seqnum{A008280} in the OEIS. They are usually called Entringer numbers, though they appear (without a combinatorial interpretation) in an 1877 paper of Seidel.

Second, let $A_{m,n}(t)$ count by descents permutations $\{-m, -m+1,\dots,0,\dots, n\}$ that end with 0. (So $A_{m,0}(t)$ and $A_{0,n}(t)$ are Eulerian polynomials.)
Then 
\begin{equation*}
\sum_{m,n=0}^\infty \frac{A_{m,n}(t)}{(1-t)^{m+n+1}}\frac{x^m}{m!}\frac{y^n}{n!} = \frac{e^x}{1-te^{x+y}}.
\end{equation*}

\bibliography{up-up}{}
\bibliographystyle{amsplain}

\end{document}